\theoremstyle{plain}
\newtheorem{theorem}{Theorem}
\newtheorem{lemma}[theorem]{Lemma}
\newtheorem*{theorem*}{Theorem}
\newcommand{\N}{\mathbb{N}}
\newcommand{\Z}{\mathbb{Z}}
\begin{document}
\author{Iwan Praton}
\author{Weiran Zeng}
\address{Franklin \& Marshall College} \email{ipraton@fandm.edu, wzeng@fandm.edu}
\title{Amicable Triangles and Rectangles on the Integer Lattice}
\date{}
\begin{abstract}
Two polygons are amicable if the perimeter of one is equal to the area of the other and vice versa. A polygon is a lattice polygon if its vertices are on the integer lattice $\Z^2$. We show that there is one pair of amicable lattice triangles and five pairs of amicable lattice rectangles.
\end{abstract}

\maketitle

A planar polygon is \emph{equable} if its area is equal to its perimeter. Note that this definition is not scale invariant: a polygon that is twice as big as an equable polygon is not itself equable. So we cannot find infinitely many equable polygons simply by scaling up or down. Unfortunately, without any additional restrictions, equable polygons are still rather easy to find. So we concentrate here on equable polygons whose vertices lie on the integer lattice $\Z^2$; such polygons are called \emph{lattice} polygons. In this case it has long been known that there are only five equable triangles; see the appendix of \cite{AC1} for a proof. The authors of \cite{AC1} have systematically investigated equable polygons on the integer and Eisenstein lattices; besides \cite{AC1}, see \cite{AC2}, \cite{AC3}, and others in the series. 

We wish to take a different path here. Two polygons are \emph{amicable} if the area of the first is equal to the perimeter of the second, and vice versa. Note that if a polygon is equable, then it is amicable with itself. To avoid this egotistical possibility, we henceforth assume that the two amicable polygons are not the same. 

Geometric amicability was introduced in \cite{PS}; there it is shown that there is only one pair of amicable heronian triangles (i.e., triangles with integer sides and integer areas). Here we investigate amicable triangles and rectangles on the integer lattice. We show that there is only one pair of amicable lattice triangles (the same as the pair in \cite{PS}).  In a bit of numerological coincidence, we also show that there are five pairs of amicable lattice rectangles, mirroring the five equable lattice triangles.

\section*{Triangles}
In this section we prove the following theorem.
\begin{theorem}
There is exactly one pair of amicable lattice triangles.
\end{theorem}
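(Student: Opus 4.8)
The plan is to reduce the lattice problem to the arithmetic of Heronian triangles, whose amicability is already classified in \cite{PS}. Two elementary facts about a lattice triangle $T$ drive everything. First, by the shoelace formula its area $A(T)$ lies in $\tfrac12\Z$. Second, each side joins two points of $\Z^2$, so its length is $\sqrt{m}$ for some positive integer $m$; hence the perimeter $P(T)$ is a sum of three such square roots.

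The key step is a lemma coming from the linear independence of $\{\sqrt d : d \text{ squarefree}\}$ over $\mathbb{Q}$: if $\sqrt{m_1}+\sqrt{m_2}+\sqrt{m_3}$ is rational with each $m_i$ a positive integer, then every $m_i$ is a perfect square. I would prove this by writing $m_i=a_i^2 d_i$ with $d_i$ squarefree, so the sum becomes $a_1\sqrt{d_1}+a_2\sqrt{d_2}+a_3\sqrt{d_3}$; collecting terms with a common squarefree part, the coefficient of any $\sqrt d$ with $d>1$ is a sum of positive integers and hence nonzero, so rationality forces all $d_i=1$. Applying this to an amicable pair $T_1,T_2$ with $A(T_1)=P(T_2)$ and $A(T_2)=P(T_1)$: the two areas lie in $\tfrac12\Z$, so both perimeters are rational; the lemma then makes every side an integer, the perimeters are therefore integers, and consequently the areas (each equal to the \emph{other} perimeter) are integers as well. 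Thus $T_1$ and $T_2$ are Heronian and constitute an amicable pair of Heronian triangles.

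It then remains to quote \cite{PS}, which supplies exactly one amicable pair of Heronian triangles, and to confirm that this pair genuinely lives on $\Z^2$. I would exhibit it explicitly: the triangle $(9,12,15)$ placed at $(0,0),(9,0),(0,12)$ and the triangle $(3,25,26)$ placed at $(0,0),(24,7),(24,10)$, whose areas $54$ and $36$ are respectively the other's perimeter. Since every amicable lattice pair is an amicable Heronian pair, and this sole Heronian pair is realized on the lattice, there is exactly one amicable pair of lattice triangles. I expect the main obstacle to be the squarefree bookkeeping in the lemma—it is precisely what excludes irrational side lengths and lets the rigidity of amicability collapse to integrality; the only other point to watch is that congruent copies are not double-counted, which is immediate since a triangle's side multiset determines it.
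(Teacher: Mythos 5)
Your proposal is correct and takes essentially the same approach as the paper: reduce to Heronian triangles by showing that amicability forces integer side lengths and hence integer areas, quote the classification of amicable Heronian pairs from \cite{PS}, and realize that unique pair on $\Z^2$ via explicit coordinates. The only divergence is that you justify the sum-of-three-radicals step by invoking the linear independence of $\{\sqrt{d} : d\ \text{squarefree}\}$ over $\mathbb{Q}$, whereas the paper gives a short self-contained elementary argument for two and three radicals (citing \cite{A} for the general fact); this is a presentational difference, not a different method.
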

Of course, ``only one pair'' means up to rigid motions. From \cite{PS} we know that there is only one pair of amicable heronian triangles. The pair consists of a triangle with side lengths 3, 25, 26 and a companion triangle with side lengths 9, 12, 15. These are lattice triangles: we can see this from \cite{Y}, or more explicitly, by noticing that the first triangle can be positioned so that its vertices are at $(0,0)$, $(0,9)$, $(12,0)$, and the second triangle can similarly be located so that its vertices are at $(0,0)$, $(24,7)$, $(24,10)$. To complete the proof, we show that amicable lattice triangles must be heronian. We start with a general about amicable polygons.
\begin{lemma}\label{lemma1}
An amicable polygon has integer side lengths.
\end{lemma}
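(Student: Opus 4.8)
The plan is to reduce the geometric statement to a number-theoretic one. Let $P$ be a lattice polygon that is amicable with a lattice polygon $Q$; I will show the side lengths of $P$ are integers, and by symmetry the same argument applies to $Q$.

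First I would observe that the area of any lattice polygon is a half-integer. By the shoelace formula, if the vertices in cyclic order are $(x_1,y_1),\dots,(x_n,y_n)$ then the area is $\tfrac12\bigl|\sum_{i=1}^{n}(x_iy_{i+1}-x_{i+1}y_i)\bigr|$ (indices read cyclically), and the quantity inside the absolute value is an integer. Hence the area lies in $\tfrac12\Z$. By the definition of amicability, the perimeter of $P$ equals the area of $Q$; since $Q$ is a lattice polygon, its area is a half-integer, and therefore the perimeter of $P$ is rational.

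Next I would describe the perimeter explicitly. Each side of $P$ connects two lattice points, so its length has the form $\sqrt{a^2+b^2}$ for integers $a,b$; thus the perimeter of $P$ is a sum $\sum_i\sqrt{n_i}$ with each $n_i$ a nonnegative integer. Writing $\sqrt{n_i}=c_i\sqrt{d_i}$ with $c_i$ a nonnegative integer and $d_i$ squarefree, and collecting terms with a common squarefree part, the perimeter takes the form $\sum_j a_j\sqrt{d_j}$, where the $d_j$ are distinct squarefree positive integers and the $a_j$ are positive integers.

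The crux, which is also the main obstacle, is the claim that such a sum is rational only when every $d_j$ equals $1$. This is precisely the classical fact that $1$ together with the square roots of the distinct squarefree integers exceeding $1$ are linearly independent over $\mathbb{Q}$; I would either cite it or prove it by induction on the number of primes dividing the product of the $d_j$, squaring to eliminate one prime at a time. Granting this, the rationality established above forces each $d_j=1$, so every side length $\sqrt{n_i}=c_i$ is an integer, as desired.
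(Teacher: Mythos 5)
Your proposal is correct and follows essentially the same route as the paper: rationality of the companion's area (you use the shoelace formula, the paper triangulates) forces the perimeter $\sum_i \sqrt{n_i}$ to be rational, and then the classical fact that square roots of distinct squarefree integers are linearly independent over $\mathbb{Q}$ forces each side length to be an integer. The only real difference is that the paper, after citing this general fact (via Cogalois theory), also supplies short self-contained arguments for the two- and three-radical cases, which are all that triangles and rectangles actually require, whereas you lean entirely on the cited (or sketched) general result.
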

\begin{proof}
This is basically the same as Remark 2 in \cite{AC1}. We provide a recap here for completeness. Consider first a lattice triangle; we can assume that its vertices are at $(0,0)$, $(p,q)$, and $(r,s)$. Its area is $\frac12|ps-qr|$, so it is rational. Now a lattice polygon can be partitioned into lattice triangles, so the area of a lattice polygon is also rational.

If a lattice polygon is part of an amicable pair, then its parameter is equal to the area of its companion lattice polygon, hence its parameter is also rational. The side length of a lattice polygon is the distance of two points in $\Z^2$, so it is of the form $\sqrt{a}$ where $a$ is an integer. Therefore the perimeter of an amicable lattice polygon is of the form $\sum_{i=1}^n \sqrt{a_i}$, where $a_i\in \N$. This sum is rational. It is well-known that if $\sum_{i=1}^n \sqrt{a_i}$ is rational and $a_i\in \N$, then each $\sqrt{a_i}$ is itself rational, hence an integer. (There is a short proof of this in \cite{A}, but it uses Cogalois theory.) This finishes the proof. 

We actually only need this result for triangles and rectangles, so we provide elementary proofs for these cases that do not depend on field extensions. First, suppose that $\sqrt{x}+\sqrt{y}$ is rational, where $x,y\in\N$. Then $\sqrt{x}-\sqrt{y} = (x-y)/(\sqrt{x}+\sqrt{y})$ is also rational. Therefore $\sqrt{x}=((\sqrt{x}+\sqrt{y})-(\sqrt{x}-\sqrt{y}))/2$ is also rational, hence an integer. Similarly, $\sqrt{y}$ is an integer.

Now suppose $d=\sqrt{a}+\sqrt{b}+\sqrt{c}$ is rational. Then $(\sqrt{a}+\sqrt{b})^2 = (d-\sqrt{c})^2$, which implies that $a+b+2\sqrt{ab}=d^2+c-2d\sqrt{c}$, i.e., $\sqrt{ab}+d\sqrt{c} = (d^2+c-a-b)/2$, which is rational. By the previous result about the sum of two roots, $d\sqrt{c}$ is an integer, so $\sqrt{c}$ is rational. Therefore $\sqrt{a}+\sqrt{b}=d-\sqrt{c}$ is also rational, and the result about the sum of two roots again allows us to conclude that $\sqrt{a}$ and $\sqrt{b}$ are integers.
\end{proof}

We can now show that amicable lattice triangle are heronian. By Lemma~\ref{lemma1} the side lengths of the triangles are integers, so we only need to show that their areas are also integers. This follows immediately from amicability: the areas are equal to the perimeters, which are integers.

\section*{Rectangles}

Let $(R_1,R_2)$ be a pair of amicable lattice rectangles. By Lemma~\ref{lemma1}, the side lengths of both rectangles are integers. Let $a$ and $b$ denote the side lengths of $R_1$, with $a\leq b$, and let $x$ and $y$ denote the side lengths of $R_2$, with $x\leq y$. Because $R_1$ and $R_2$ are amicable, one of these rectangles must have its perimeter be larger than (or equal to) its area. We choose $R_1$ to be that rectangle and we focus our attention on it. There are not many such rectangles, since integer areas are usually larger than perimeters.

\begin{lemma}
Let $R_1$ be  as above,  so its side lengths are $a$ and $b$ where $a\leq b$, and its area is at most equal to its perimeter, i.e., $ab\leq 2(a+b)$. Then $a=1$ or $a=2$.
\end{lemma}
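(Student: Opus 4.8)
The plan is to convert the hypothesis $ab \le 2(a+b)$ into a bounded condition by completing the product. Writing $ab - 2a - 2b + 4 \le 4$ gives $(a-2)(b-2) \le 4$. Since $a \le b$, if $a \ge 5$ then both factors are at least $3$, forcing the product to be at least $9 > 4$; hence $a \le 4$. So the inequality by itself already pins $a$ to $\{1,2,3,4\}$, and the real content of the lemma is the elimination of $a = 3$ and $a = 4$.

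I expect this elimination to be the main obstacle, because it cannot be done from the inequality alone: the squares $3 \times 3$, $4 \times 4$ and rectangles like $3 \times 4$ or $3 \times 6$ all satisfy $(a-2)(b-2) \le 4$. I would therefore invoke the amicable structure carried by ``as above.'' First I would sharpen the inequality to be strict. If $ab = 2(a+b)$ then $R_1$ is equable, so its companion $R_2$ has the same area and the same perimeter as $R_1$; since a rectangle's side lengths are recovered from its perimeter and area as the roots of $t^2 - \tfrac12(\text{perimeter})\,t + (\text{area}) = 0$, this forces $R_2 = R_1$, contradicting the standing assumption that the pair is not egotistical. Hence $ab < 2(a+b)$, i.e.\ $(a-2)(b-2) < 4$, which rules out $a = 4$ outright (there $b = 4$ is the only option and gives equality) and restricts the case $a = 3$ to $b \in \{3,4,5\}$.

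To finish the three remaining cases I would use the companion's defining equations. By Lemma~\ref{lemma1} the sides $x \le y$ of $R_2$ are integers, and amicability gives $xy = 2(a+b)$ and $2(x+y) = ab$, so $x,y$ are the integer roots of $t^2 - \tfrac{ab}{2}\,t + 2(a+b) = 0$. This requires $ab$ even and the discriminant $(ab/2)^2 - 8(a+b)$ to be a nonnegative perfect square. For $a = 3$, the values $b = 3$ and $b = 5$ make $ab$ odd, so $x+y$ is not an integer and no companion exists; for $b = 4$ the discriminant is $36 - 56 < 0$, so no real companion exists. Since all three cases collapse, the only survivors are $a = 1$ and $a = 2$, as claimed.
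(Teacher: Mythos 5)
Your proof is correct, and it follows the same overall route as the paper: bound $a\le 4$ from the inequality, then use the amicability constraints together with integrality (Lemma~\ref{lemma1}) and the no-self-pairing assumption to kill $a=3$ and $a=4$. Two organizational differences are worth noting, both in your favor. First, the paper bounds $a$ by the direct estimate $ab>4b\ge 2a+2b$ for $a>4$, while you use the factorization $(a-2)(b-2)\le 4$; these are equivalent, but your form makes the subsequent case bounds ($b\le 4$ when $a=4$, $b\le 6$ when $a=3$) fall out uniformly. Second, and more substantively, the paper discovers the two equable configurations ($4\times 4$ and $3\times 6$) inside the case analysis and discards each by observing that its only partner is itself, whereas you prove the general fact up front: if $ab=2(a+b)$ then $R_1$ is equable, and since a rectangle is determined by its perimeter and area (as roots of $t^2-\tfrac12 P\,t+A=0$), its only amicable partner is itself. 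This sharpens the hypothesis to $(a-2)(b-2)<4$ before any cases are examined, eliminates $a=4$ immediately, and shrinks the $a=3$ analysis to $b\in\{3,4,5\}$, which you then dispatch by the same parity argument as the paper ($b=3,5$) and a negative-discriminant computation that is just a repackaging of the paper's ``no integer solutions to $x+y=6$, $xy=14$'' check ($b=4$). The net effect is the same lemma with one fewer case and a reusable observation about equable rectangles.
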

\begin{proof}
If $a> 4$, then $ab> 4b =2b+2b\geq 2a+2b$, contradicting $ab \leq 2(a+b)$. Thus $a\leq 4$. If $a=4$, then $ab \leq  2(a+b)$ becomes $b\leq 4$, which implies that $b=4$ (since $b\geq a$). Thus $R_1$ is a $4$-by-$4$ square; its area and perimeter are both 16. Its only amicable partner is itself, so we rule it out.

We now consider the possibility that $a=3$. Then $ab\leq 2(a+b)$ becomes $b\leq 6$, so $b=3, 4, 5$, or $6$. If $b=3$ or $b=5$, then the area of $R_1$ is odd, so it cannot be equal to the perimeter of $R_2$, which is even. If $b=4$, then the area of $R_1$ is 12 and the perimeter is 14. So we must have $2(x+y)=12$ and $xy=14$. It is to check that there are no integer solutions to these two equations. We can also check that  $b=6$ leads to $R_1$ being the same rectangle as $R_2$, which we already ruled out. Therefore $a=1$ or $a=2$.
\end{proof}

We can now prove our main theorem.

\begin{theorem}
The following pairs of lattice rectangles are amicable:
\begin{itemize}
\item
1-by-34 and 7-by-10
\item 1-by-38 and 6-by-13
\item 1-by-54 and 5-by-22
\item 2-by-10 and 4-by-6
\item 2-by-13 and 3-by-10
\end{itemize}
These are the only amicable lattice rectangles.
\end{theorem}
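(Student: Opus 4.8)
The plan is to translate amicability into a pair of Diophantine equations and reduce the whole problem to a finite factorization. Write the sides of $R_1$ as $a,b$ and those of $R_2$ as $x,y$, all positive integers by Lemma~\ref{lemma1}, with $a\le b$ and $x\le y$. Amicability says the area of each rectangle equals the perimeter of the other, which gives the system
\[
ab = 2(x+y), \qquad 2(a+b) = xy.
\]
Since $R_1$ is the perimeter-heavy member, the preceding lemma tells us its smaller side satisfies $a=1$ or $a=2$. So I would split into exactly these two cases, and in each case use the first equation to solve for $b$ and eliminate it from the second, leaving a single equation relating $x$ and $y$.

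In the case $a=1$, the first equation gives $b=2(x+y)$; substituting into the second yields $xy-4x-4y=2$, which I would rewrite by the standard factoring trick as $(x-4)(y-4)=18$. In the case $a=2$, the first equation gives $b=x+y$, and substitution produces $xy-2x-2y=4$, i.e. $(x-2)(y-2)=8$. Each of these is now a matter of enumerating the finitely many ways to write $18$ (respectively $8$) as an ordered product $uv$ of two integers with $u\le v$, subject to the constraints $1\le x\le y$ so that the sides are genuine positive integers.

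Running through the positive factorizations, $(x-4)(y-4)=18$ with $1\cdot 18,\ 2\cdot 9,\ 3\cdot 6$ gives $(x,y)=(5,22),(6,13),(7,10)$ and correspondingly $b=54,38,34$, producing the first three listed pairs; the equation $(x-2)(y-2)=8$ with $1\cdot 8,\ 2\cdot 4$ gives $(x,y)=(3,10),(4,6)$ with $b=13,10$, producing the last two. Every negative factorization forces a side $x\le 0$ and is discarded, so these five pairs are exhaustive. I would then close the argument with two routine checks: that each pair consists of genuinely distinct rectangles, ruling out the egotistical self-amicable case, and that in each pair $R_2$ is indeed area-heavy, confirming that the labeling of $R_1$ as the perimeter-heavy rectangle is self-consistent and that no pair has been counted twice.

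The conceptual content here is light, so I expect the main obstacle to be bookkeeping rather than any single hard step. Specifically, the work lies in making sure the case split on $a$ is genuinely complete (this is exactly what the preceding lemma guarantees), in accounting for all sign variants among the factorizations, and in verifying positivity and distinctness so that no spurious or degenerate ``rectangle'' slips through. Because the factoring trick collapses each case to a short finite list, the enumeration itself is immediate once the equations are set up correctly.
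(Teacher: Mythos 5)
Your proposal is correct and follows essentially the same route as the paper: the same amicability equations, the same case split $a\in\{1,2\}$ from the preceding lemma, and the same reduction to enumerating divisors (of $18$ and $8$). The only cosmetic difference is that you eliminate $b$ and use the symmetric factorization $(x-4)(y-4)=18$, resp.\ $(x-2)(y-2)=8$, whereas the paper solves the linear system for $b$ and $y$ and imposes divisibility by $ax-4$; your variant even absorbs the paper's degenerate case $ax=4$ automatically, but the enumeration is identical.
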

\begin{proof}
It is straightforward to check that these five pairs are amicable. We need to show that there are no others.

Suppose the amicable pair, as above, consists of the $a\times b$ and the $x\times y$ rectangles, where $ab\leq 2(a+b)$, $a\leq b$, and $x\leq y$. 
By amicability we have $ab=2(x+y)$ and $2(a+b)=xy$. We consider these equations as a linear system of equations in two unknowns $b$ and $y$:
\[
\begin{pmatrix}
a & -2\\
-2 & x\\
\end{pmatrix}
\begin{pmatrix}
b \\
y\\
\end{pmatrix}
=
\begin{pmatrix}
2x \\
2a\\
\end{pmatrix}
\]
If $ax\neq 4$, then there is a unique solution: 
\[
b=\frac{2x^2+4a}{ax-4}; \quad y=\frac{4x+4a}{ax-4}.
\]
Thus $ax-4$ must be a divisor of $2x^2+4a$ and $4x+4a$. But by the previous lemma, either $a=1$ or $a=2$, so this divisibility requirement can be treated separately.

First suppose $a=1$. If $x=4$, then the amicability equations have no solution. If $x\neq 4$, then $b=(2x^2+4)/(x-4)$ and $y=(4x+2)/(x-4)$. Let $c=x-4$. The second equation implies that $4x+2=4(c+4)+2=4c+18$ is divisible by $c$. Thus $c$ must be a divisor of $18$, so the possible values of $c$ are $1, 2, 3, 6, 9, 18$. The corresponding values of $x$, $y$, and $b$ are $5,6,7,10,13,22$; $22,13,10,7,6,5$; and $54,38,34,34,38,54$. These values lead to the first three entries in the theorem.

Now suppose $a=2$. If $x=2$, the amicability equations have no solution. If $x\neq 2$, then $b=(x^2+4)/(x-2)$ and $y=(2x+4)/(x-2)$. Arguing as before, we see that $x-2$ must be a divisor of $8$, and hence the possible values of $x$ are $3,4,6,10$. The corresponding values of $y$ and $b$  are, in order, $10,6,4,3$ and $13,10,10,13$. These lead to the last two entries in the theorem.
\end{proof}

\end{document}